\newtheorem{theorem}{Theorem}[section]
\newtheorem{lemma}[theorem]{Lemma}
\newtheorem{corollary}[theorem]{Corollary}
\begin{document}
\title{On the Commutator Map\\
for\\
Real Semisimple Lie Algebras}
\author{Dmitri Akhiezer}
\address
{Dmitri Akhiezer\newline
Institute for Information Transmission Problems\newline
19 B.Karetny per.,127994 Moscow, Russia}
\noindent
\email{akhiezer@iitp.ru}
\subjclass{17B20}
\keywords{Lie algebra, Cartan decomposition}
\begin{abstract}
We find new sufficient conditions for the commutator map of a real 
semisimple Lie algebra
to be surjective. As an application, we prove the surjectivity
of the commutator map for all simple algebras except 
${\mathfrak s \mathfrak u}_{p,q}$ ($p$ or $q > 1$), 
${\mathfrak s \mathfrak o}_{p,p+2}$\ ($p$ odd or $p=2$),\
${\mathfrak u}^*_{2m+1}({\mathbb H})$\ ($m\ge 1$) and $EIII$. 
\end{abstract}
\renewcommand{\subjclassname}
{\textup{2010} Mathematics Subject Classification}
\maketitle
\renewcommand{\thefootnote}{}
\footnotetext{Supported by SFB/TR 12 and SPP 1388 of the DFG}
\maketitle

\section{Introduction and statement of results}\label{intro}
Let $\mathfrak g$ be a semisimple Lie algebra.
The commutator map 
$$\mathfrak g \times \mathfrak g\to \mathfrak g,\ (X,Y)\mapsto [X,Y],$$
is known to be surjective if $\mathfrak g$
is split. This result, due to Gordon Brown\,\cite{Br},
is valid for all infinite fields and for finite fields of sufficiently big cardinality.  
If $\mathfrak g $ is any real semisimple Lie algebra
then its complexification
$\mathfrak g^{\mathbb C}$ is split over $\mathbb C$.
Therefore, for any $Z \in {\mathfrak g}$ there exist
$X_1, X_2, Y_1, Y_2 \in \mathfrak g$, such that
$Z = [X_1+iX_2, Y_1+iY_2]$. Hence 
$Z$ is the sum of two commutators, namely, $Z = [X_1, Y_1] - [X_2, Y_2]$.
To the best of author's knowledge, 
the surjectivity of the
commutator map is in general not established. On the other hand,  
there is no example where two commutators in the above formula are indeed essential,
so that a presentation
of $Z$ as one commutator does not exist.

Our goal is to obtain new sufficient conditions for the surjectivity
of the commutator map.

\begin{theorem}\label{main}
Let $\mathfrak g$ be a semisimple real Lie algebra, $\mathfrak g = \mathfrak k + \mathfrak p$
a Cartan decomposition, 
$\mathfrak a $ a maximal abelian subspace in $\mathfrak p$, and
$\mathfrak m = {\mathfrak z}_{\mathfrak k}(\mathfrak a)$ the centralizer of $\mathfrak a$ in $\mathfrak k$.
If $\mathfrak m$ is semisimple
then the commutator map $\mathfrak g \times \mathfrak g \to \mathfrak g , (X,Y)\mapsto [X,Y],$ is 
surjective.
\end{theorem}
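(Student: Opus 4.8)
The plan is to exhibit, for an arbitrary $Z\in\mathfrak g$, a single element $X$ with $Z\in[\mathfrak g,X]:=\{[X,Y]:Y\in\mathfrak g\}$; replacing $Z$ by an $\mathrm{Ad}(g)$-conjugate does not change whether $Z$ is a commutator, so we are free to conjugate along the way. Fix the Cartan involution $\theta$ with $\mathfrak g^{\theta}=\mathfrak k$, let $B$ be the Killing form, and recall the restricted root space decomposition $\mathfrak g=\mathfrak m\oplus\mathfrak a\oplus\bigoplus_{\lambda\in\Sigma}\mathfrak g_{\lambda}$, in which the three kinds of summand are mutually $B$-orthogonal, $B|_{\mathfrak a}$ is positive definite, and $\mathfrak m\subset\mathfrak k$. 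By hypothesis $\mathfrak m$ is semisimple, hence $B|_{\mathfrak m}$ is nondegenerate and a maximal torus $\mathfrak t\subset\mathfrak m$ has no nonzero fixed vector under the Weyl group $W_{\mathfrak m}$ of $\mathfrak m$; moreover the restricted Weyl group $W$ has no nonzero fixed vector on $\mathfrak a$, simply because $\Sigma$ spans $\mathfrak a^{*}$ ($\mathfrak g$ being semisimple).

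The key point is the identity
\[
[\mathfrak g,\,H+M]\;=\;[\mathfrak m,M]\ \oplus\ \bigoplus_{\lambda\in\Sigma}\mathfrak g_{\lambda}
\qquad(H\in\mathfrak a\ \text{regular},\ M\in\mathfrak m).
\]
Indeed $\mathrm{ad}(H+M)$ is semisimple, being the sum of the commuting semisimple operators $\mathrm{ad}H$ and $\mathrm{ad}M$, so its image equals $\mathfrak z_{\mathfrak g}(H+M)^{\perp}$; on each $\mathfrak g_{\lambda}$ it acts as $\lambda(H)\cdot\mathrm{id}$ plus $\mathrm{ad}(M)|_{\mathfrak g_{\lambda}}$, which is skew for the positive definite form $-B(\,\cdot\,,\theta\,\cdot\,)$ and hence has purely imaginary spectrum, so $\lambda(H)\neq 0$ makes $\mathrm{ad}(H+M)$ invertible there; and on $\mathfrak a\oplus\mathfrak m$ it annihilates $\mathfrak a$ and acts on $\mathfrak m$ as $\mathrm{ad}_{\mathfrak m}(M)$. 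Thus $\mathfrak z_{\mathfrak g}(H+M)=\mathfrak a\oplus\mathfrak z_{\mathfrak m}(M)$, and taking orthogonal complements yields the formula. Consequently $Z\in[\mathfrak g,H+M]$ as soon as: (i) the $\mathfrak a$-component of $Z$ vanishes, and (ii) the $\mathfrak m$-component $Z_{\mathfrak m}$ lies in $[\mathfrak m,M]$ for a suitable $M$.

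For (i) I would conjugate. Writing $Z=Z_{\mathfrak k}+Z_{\mathfrak p}$ and using $Z_{\mathfrak k}\perp\mathfrak a$, the $\mathfrak a$-component of $\mathrm{Ad}(k)Z$ ($k\in K$) is the orthogonal projection onto $\mathfrak a$ of $\mathrm{Ad}(k)Z_{\mathfrak p}\in\mathfrak p$. By Cartan's theorem $Z_{\mathfrak p}$ is $\mathrm{Ad}(K)$-conjugate to a point of $\overline{\mathfrak a^{+}}$, and by Kostant's convexity theorem the set of these projections as $k$ runs over $K$ is the convex hull of a $W$-orbit in $\mathfrak a$, which contains $0$ because $W$ has no nonzero invariants. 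So, after a $K$-conjugation, we may assume $Z\in\mathfrak m\oplus\bigoplus_{\lambda}\mathfrak g_{\lambda}$. For (ii), the analogous convexity theorem for the adjoint action of the compact group with Lie algebra $\mathfrak m$ — using that $\mathfrak t$, a maximal torus of the \emph{semisimple} algebra $\mathfrak m$, has no $W_{\mathfrak m}$-fixed vectors — shows that $Z_{\mathfrak m}$ is $B$-orthogonal to some maximal torus $\mathfrak t'$ of $\mathfrak m$; taking $M$ to be a regular element of $\mathfrak t'$ gives $[\mathfrak m,M]=\mathfrak z_{\mathfrak m}(M)^{\perp}\ni Z_{\mathfrak m}$. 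With this $M$ and any regular $H\in\mathfrak a$ we conclude $Z\in[\mathfrak g,H+M]$, i.e.\ $Z$ is a commutator; the case $\mathfrak a=0$, where $\mathfrak g$ itself is compact semisimple, is the evident degeneration of the same argument.

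The step I expect to be the main obstacle is (i): showing that every adjoint orbit meets $\mathfrak m\oplus\bigoplus_{\lambda}\mathfrak g_{\lambda}$. The route through Kostant's convexity theorem is short, but one must verify carefully that the orthogonal projection $\mathfrak p\to\mathfrak a$ is genuinely the $\mathfrak a$-component in the decomposition above, and, conceptually, it is exactly in the two appeals to convexity that semisimplicity of $\mathfrak m$ (for (ii)) and of $\mathfrak g$ (for (i)) are what force $0$ into the relevant convex hulls; semisimplicity of $\mathfrak m$ is needed once more, via nondegeneracy of $B|_{\mathfrak m}$, in identifying $[\mathfrak m,M]$ with $\mathfrak z_{\mathfrak m}(M)^{\perp}$.
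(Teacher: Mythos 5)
Your proof is correct and follows essentially the same route as the paper's: both reduce, via Kostant's convexity theorem applied once to $(\mathfrak g,\mathfrak k)$ and once to the compact semisimple algebra $\mathfrak m$, to the statement that (a conjugate of) $Z$ is Killing-orthogonal to a Cartan subalgebra $\mathfrak t'+\mathfrak a$ of $\mathfrak g$, and then use ${\rm Im}\,{\rm ad}(A)={\mathfrak z}(A)^{\perp}$ for a regular $A$ in that Cartan subalgebra. The only cosmetic differences are that you verify the key identity explicitly through the restricted root space decomposition and move the maximal torus of $\mathfrak m$ to fit $Z_{\mathfrak m}$ rather than conjugating $Z$ a second time.
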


If $\mathfrak g$ is split then $\mathfrak a$ is a Cartan subalgebra
of $\mathfrak g$ and $\mathfrak m = \{0\}$. 
If $\mathfrak g$ is compact then $\mathfrak a$= \{0\} and $\mathfrak m = \mathfrak g$.
Therefore we have the following corollary.

\begin{corollary}\label{cor} If $\mathfrak g $ is split or compact then the commutator map is surjective.
\end{corollary}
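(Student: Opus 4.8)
Since Theorem~\ref{main} is at our disposal, the plan is to deduce the Corollary from it by identifying the algebra $\mathfrak m=\mathfrak z_{\mathfrak k}(\mathfrak a)$ in each of the two cases and checking that it is semisimple. First I would record the two structural facts that are needed: for a split $\mathfrak g$ the subspace $\mathfrak a$ is already a Cartan subalgebra of $\mathfrak g$, and for a compact $\mathfrak g$ the Killing form is negative definite on all of $\mathfrak g$. After that, each case comes down to a one-line computation of $\mathfrak m$ followed by an appeal to Theorem~\ref{main}. I do not expect any real obstacle here: the substantive content is entirely inside Theorem~\ref{main}. The only point deserving a word is whether the hypothesis ``$\mathfrak m$ semisimple'' of Theorem~\ref{main} is to be read as covering the degenerate case $\mathfrak m=\{0\}$; it is, since the zero Lie algebra has no nonzero solvable ideals and is therefore semisimple.

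In the split case I would argue as follows. A maximally split Cartan subalgebra of $\mathfrak g$ has the form $\mathfrak t\oplus\mathfrak a$ with $\mathfrak t\subseteq\mathfrak m\subseteq\mathfrak k$, and by the definition of splitness $\mathfrak t=\{0\}$, so that $\mathfrak a$ is itself a Cartan subalgebra of $\mathfrak g$. Since a Cartan subalgebra of a semisimple Lie algebra is self-centralizing, $\mathfrak z_{\mathfrak g}(\mathfrak a)=\mathfrak a$, whence
$$\mathfrak m=\mathfrak z_{\mathfrak k}(\mathfrak a)=\mathfrak z_{\mathfrak g}(\mathfrak a)\cap\mathfrak k=\mathfrak a\cap\mathfrak k=\{0\},$$
because $\mathfrak a\subseteq\mathfrak p$ and $\mathfrak k\cap\mathfrak p=\{0\}$ in a Cartan decomposition. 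By the remark above $\mathfrak m$ is semisimple, so Theorem~\ref{main} applies and the commutator map is surjective.

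In the compact case I would note that, in any Cartan decomposition, the Killing form $B$ is positive definite on $\mathfrak p$, while compactness of $\mathfrak g$ forces $B$ to be negative definite on $\mathfrak g$ and in particular on $\mathfrak p$; hence $\mathfrak p=\{0\}$. Then $\mathfrak a=\{0\}$ and $\mathfrak m=\mathfrak z_{\mathfrak k}(\mathfrak a)=\mathfrak k=\mathfrak g$, which is semisimple by hypothesis. Theorem~\ref{main} again applies and gives the surjectivity of the commutator map, which completes the proof.
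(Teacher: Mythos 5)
Your proposal is correct and is exactly the paper's argument: the author deduces the corollary from Theorem~\ref{main} by observing that $\mathfrak m=\{0\}$ in the split case (since $\mathfrak a$ is then a Cartan subalgebra) and $\mathfrak m=\mathfrak g$ in the compact case (since $\mathfrak p=\{0\}$). You merely spell out the two standard structural facts that the paper states without proof, including the harmless convention that $\{0\}$ counts as semisimple.
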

As we noted above, the result in the split case is known, see\,\cite{Br}. In the compact case Corollary\,\ref{cor}  
is easily deduced from Theorem\,3.4 in\,\cite{DT}. Our proof in both cases is new.
It is not even necessary to invoke Theorem ~\ref{main} in full generality, 
see Corollaries~\ref{split} and ~\ref{compact}.

Clearly, the question of surjectivity of the commutator map
reduces to simple Lie algebras. Apart from real split, compact and complex simple algebras,
we have the following list, see e.g.\,\cite{On} for the notations.

\begin{theorem}\label{simple}
The commutator map is surjective for classical algebras
${\mathfrak s \mathfrak l}_m({\mathbb H}) (m \ge 2), {\mathfrak s \mathfrak o}_{p,q} 
(1\le p\le q, q \ne p + 2),
 {\mathfrak u}^*_{2m}{(\mathbb H)} (m\ge 2),
 {\mathfrak s \mathfrak p}_{p,q} (1\le p \le q)$ and for 
exceptional algebras EIV, EVI, EVII, EIX, FII.
\end{theorem}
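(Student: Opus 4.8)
The plan is to obtain Theorem~\ref{simple} as a direct application of Theorem~\ref{main}. For each algebra $\mathfrak g$ in the list we fix a Cartan decomposition $\mathfrak g=\mathfrak k+\mathfrak p$, a maximal abelian subspace $\mathfrak a\subset\mathfrak p$, and we check that $\mathfrak m=\mathfrak z_{\mathfrak k}(\mathfrak a)$ is semisimple. Since $\mathfrak m$ is a reductive subalgebra of the compact algebra $\mathfrak k$, it decomposes as $\mathfrak m=[\mathfrak m,\mathfrak m]\oplus\mathfrak z(\mathfrak m)$ with $[\mathfrak m,\mathfrak m]$ compact semisimple, so the hypothesis of Theorem~\ref{main} (that $\mathfrak m$ be semisimple) is exactly the condition $\mathfrak z(\mathfrak m)=0$. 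The isomorphism type of $\mathfrak m$ for a real simple Lie algebra is classical: it is determined by the restricted root system together with its multiplicities, and equivalently by the Satake diagram, in which the black subdiagram is the Dynkin diagram of $[\mathfrak m,\mathfrak m]^{\mathbb C}$ while $\dim\mathfrak z(\mathfrak m)$ equals the number of arrows; see \cite{On}. Thus the needed property is simply that the Satake diagram of $\mathfrak g$ have no arrows, and the proof reduces to an inspection of finitely many cases. The identifications of $\mathfrak m$ recorded below can be confirmed by the dimension count $\dim\mathfrak m=\dim\mathfrak g-\dim\mathfrak a-\sum_{\lambda}\dim\mathfrak g_{\lambda}$, the sum being over the restricted roots $\lambda$.

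For the classical members of the list one reads off the following. For $\mathfrak{sl}_m(\mathbb H)$ ($m\ge2$) the restricted system is $\mathsf A_{m-1}$ with all multiplicities $4$ and $\mathfrak m\cong\mathfrak{su}(2)^m$. For $\mathfrak{so}_{p,q}$ ($1\le p\le q$) the restricted system is $\mathsf B_p$, or $\mathsf D_p$ when $p=q$, and $\mathfrak m\cong\mathfrak{so}(q-p)$, with the convention $\mathfrak{so}(0)=\mathfrak{so}(1)=0$. For $\mathfrak u^{*}_{2m}(\mathbb H)$ ($m\ge2$) the restricted system is $\mathsf C_m$ and $\mathfrak m\cong\mathfrak{su}(2)^m$. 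For $\mathfrak{sp}_{p,q}$ ($1\le p\le q$) the restricted system is $(\mathsf{BC})_p$, or $\mathsf C_p$ when $p=q$, and $\mathfrak m\cong\mathfrak{sp}(q-p)\oplus\mathfrak{su}(2)^p$. In each of these $\mathfrak m$ is semisimple, since every $\mathfrak{su}(2)^k$ and every $\mathfrak{sp}(k)$ is, and since $\mathfrak{so}(q-p)$ is semisimple precisely because $q-p=2$ — the only value for which $\mathfrak z(\mathfrak m)\cong\mathfrak{so}(2)\ne0$ — is excluded by the hypothesis $q\ne p+2$. For the exceptional members: $\mathfrak m\cong\mathfrak{so}(8)$ for $EIV$ (restricted system $\mathsf A_2$, all multiplicities $8$), for $EVII$ (restricted system $\mathsf C_3$) and for $EIX$ (restricted system $\mathsf F_4$); $\mathfrak m\cong\mathfrak{su}(2)^3$ for $EVI$ (restricted system $\mathsf F_4$); and $\mathfrak m\cong\mathfrak{so}(7)$ for $FII$ (restricted system $(\mathsf{BC})_1$). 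All of these are semisimple, so in every case Theorem~\ref{main} applies and the commutator map is surjective.

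The argument has no genuine obstacle beyond carrying out this case check accurately; what deserves attention is the sharpness of the conditions. For $\mathfrak{so}_{p,q}$ one must confirm that $q=p+2$ is exactly the barred value, and for $\mathfrak u^{*}_n(\mathbb H)$ that an odd index $n$ is precisely the case where $\mathfrak m$ acquires a one-dimensional center (indeed $\mathfrak m\cong\mathfrak{su}(2)^{(n-1)/2}\oplus\mathbb R$ for odd $n$). Together with the analogous failures for $\mathfrak{su}_{p,q}$ ($p$ or $q>1$) and for $EIII$, where $\mathfrak m$ again has nonzero center, this is exactly why those algebras lie outside the reach of Theorem~\ref{main} and are absent from the statement.
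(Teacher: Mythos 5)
Your proposal is correct and follows essentially the same route as the paper: invoke Theorem~\ref{main} after verifying case by case that $\mathfrak m=\mathfrak z_{\mathfrak k}(\mathfrak a)$ is semisimple, which (as in the paper's Theorem~\ref{arrows}) amounts to the Satake diagram having no two-pointed arrows. The paper disposes of the case check by pointing to the table of Satake diagrams in \cite{On}, whereas you spell out the isomorphism type of $\mathfrak m$ in each case; your identifications and dimension counts are accurate, including the sharpness discussion for $\mathfrak{so}_{p,p+2}$ and $\mathfrak u^*_{2m+1}(\mathbb H)$.
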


Recall that $\mathfrak g$ is said to be of inner type 
if the Cartan involution of $\mathfrak g$ is an inner automorphism or,
equivalently, if $\mathfrak k$ has full rank. 
The list of simple 
algebras of inner type comprises all Hermitian algebras as well as
${\mathfrak s \mathfrak o}_{p,q}$ with $p$ or $q$ even, $p,q\ne 2$, 
${\mathfrak s \mathfrak p}_{p,q}$, and
exceptional algebras 
$EII, EV, EVI, EVIII, EIX, FI, FII,G$.  

\begin{theorem}\label{inner} If $\mathfrak g$ is a simple non-Hermitian algebra of inner type
then the commutator map of $\mathfrak g$ is surjective. 
In particular,
the commutator map is surjective for
${\mathfrak s \mathfrak o}_{p, p+2}$ ($p>2,\ p$ even) and $EII$.
\end{theorem}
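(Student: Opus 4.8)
The plan is to combine the classification of the real simple Lie algebras with Theorem~\ref{main}, Theorem~\ref{simple} and Corollary~\ref{cor}, which reduces the assertion to two residual algebras, and then to handle those two directly. As already observed, it suffices to treat a simple $\mathfrak g$. By Corollary~\ref{cor} I may discard the split and compact algebras, and by Theorem~\ref{simple} I may discard ${\mathfrak s \mathfrak l}_m(\mathbb H)$, ${\mathfrak s \mathfrak o}_{p,q}$ with $q\ne p+2$, ${\mathfrak u}^*_{2m}(\mathbb H)$, ${\mathfrak s \mathfrak p}_{p,q}$, and the exceptional $EVI, EIX, FII$. Going through the list of simple non-Hermitian algebras of inner type recalled just above, I compute $\mathfrak m=\mathfrak z_{\mathfrak k}(\mathfrak a)$ in each remaining algebra: for ${\mathfrak s \mathfrak o}_{p,q}$ ($p\le q$) one has $\mathfrak m\cong{\mathfrak s \mathfrak o}_{q-p}$; for ${\mathfrak s \mathfrak p}_{p,q}$ ($p\le q$) one has $\mathfrak m\cong{\mathfrak s \mathfrak p}_{q-p}\oplus({\mathfrak s \mathfrak u}_2)^{\oplus p}$; for the exceptional ones $\mathfrak m$ is read off from the restricted root multiplicities. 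In every case except ${\mathfrak s \mathfrak o}_{p,p+2}$ with $p$ even, $p>2$ --- where $\mathfrak m\cong{\mathfrak s \mathfrak o}_2$ --- and $EII$ --- where $\mathfrak m$ is two-dimensional and abelian --- the algebra $\mathfrak m$ is semisimple, so Theorem~\ref{main} gives the conclusion. Thus it remains to prove surjectivity of the commutator map for $\mathfrak g={\mathfrak s \mathfrak o}_{p,p+2}$ ($p$ even, $p>2$) and for $\mathfrak g=EII$.

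For these two algebras Theorem~\ref{main} is no longer available, and I argue by hand. The starting point is that $\mathfrak g$ is non-Hermitian, so $\mathfrak z(\mathfrak k)=0$ and the compact algebra $\mathfrak k$ is semisimple; by the compact case of Corollary~\ref{cor} every element of $\mathfrak k$ is a commutator in $\mathfrak k$, and hence every elliptic element of $\mathfrak g$ (one conjugate into $\mathfrak k$) is a commutator in $\mathfrak g$. For a general $Z\in\mathfrak g$ I use the fact that $Z=[X,Y]$ is solvable for $Y$ precisely when $Z$ is Killing-orthogonal to $\mathfrak z_{\mathfrak g}(X)$; I take the Jordan decomposition of $Z$ and conjugate so that its hyperbolic part $Z_h$ lies in $\mathfrak a$, so that $Z-Z_h$ lies in the reductive subalgebra $\mathfrak l=\mathfrak z_{\mathfrak g}(Z_h)$, and the task becomes to exhibit an $X$ with $Z\perp\mathfrak z_{\mathfrak g}(X)$. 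In the basic case in which $Z_h$ is regular in $\mathfrak a$ one has $\mathfrak l=\mathfrak a\oplus\mathfrak m$ and $Z=Z_h+\mu J$ with $J$ a generator of $\mathfrak m$; for ${\mathfrak s \mathfrak o}_{p,p+2}={\mathfrak s \mathfrak o}(V)$ I choose a split subalgebra ${\mathfrak s \mathfrak o}(e^{\perp})\cong{\mathfrak s \mathfrak o}_{p,p+1}$ that contains $\mathfrak a$ together with a principal nilpotent $X$ of it whose semisimple partner lies in $\mathfrak a$; then $\mathfrak z_{\mathfrak g}(X)=\mathfrak z_{{\mathfrak s \mathfrak o}(e^{\perp})}(X)\oplus\ker(X|_{e^{\perp}})$, and since $\ker(X|_{e^{\perp}})$ is the highest-weight line of the irreducible principal ${\mathfrak s \mathfrak l}_2$-module $e^{\perp}$ while $J$ corresponds to its weight-zero line, a short computation shows that both summands are Killing-orthogonal to $Z=Z_h+\mu J$. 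The remaining $Z$ --- those with non-regular $Z_h$ or with nonzero nilpotent part --- are reduced, by descending induction on $\dim\mathfrak g$, to the analogous statement inside $[\mathfrak l,\mathfrak l]$, whose non-compact simple factors are again either split or again of the form ${\mathfrak s \mathfrak o}_{k,k+2}$. The algebra $EII$ is treated in the same spirit, using its realization inside $E_6$ and its restricted root system $F_4$.

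The delicate point --- and the place where the inner-type hypothesis is genuinely used --- is exactly the step that Theorem~\ref{main} had avoided: one must organize the Jordan components (hyperbolic, elliptic, nilpotent) of $Z$ so that they are absorbed into a \emph{single} commutator and not merely into a sum of commutators, and one must verify the orthogonality $Z\perp\mathfrak z_{\mathfrak g}(X)$ for the explicit $X$ in every Levi subalgebra $\mathfrak l$ that appears in the induction, making sure in particular that no Hermitian factor (such as ${\mathfrak s \mathfrak o}_{2,4}$) survives as a carrier of an obstruction. If the inner-type hypothesis is dropped --- for instance for ${\mathfrak s \mathfrak o}_{p,p+2}$ with $p$ odd --- this argument breaks down, in accordance with the fact, recorded in the abstract, that surjectivity of the commutator map in that case is currently open.
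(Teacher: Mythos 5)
Your reduction to the two residual algebras ${\mathfrak s\mathfrak o}_{p,p+2}$ ($p$ even, $p>2$) and $EII$ is correct, and your computation in the ``basic case'' ($Z_h$ regular in $\mathfrak a$, $Z=Z_h+\mu J$, $X$ a principal nilpotent of a split ${\mathfrak s\mathfrak o}(e^\perp)$) does check out. But the proof is not complete, and you say so yourself: everything you label ``the delicate point'' --- the descending induction over the Levi subalgebras for $Z$ with non-regular hyperbolic part or nonzero nilpotent part, and the whole of $EII$ --- is asserted rather than proved. Worse, the induction as sketched meets a structural obstruction. Suppose $Z=Z_h$ is hyperbolic but not regular. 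Any $X$ with $Z\in{\rm Im}\,{\rm ad}(X)$ must satisfy $Z\perp{\mathfrak z}(X)$ by Lemma~\ref{ortho}; since $Z_h$ is conjugate into ${\mathfrak a}\subset{\mathfrak p}$, the Killing form gives $\beta(Z_h,Z_h)>0$, so necessarily $Z_h\notin{\mathfrak z}(X)$, i.e.\ $X\notin{\mathfrak l}={\mathfrak z}_{\mathfrak g}(Z_h)$. Hence no $X$ produced by ``the analogous statement inside $[\mathfrak l,\mathfrak l]$'' can work, and for each Jordan type you must manufacture an explicit $X$ outside $\mathfrak l$ and verify the orthogonality by hand, as you did only in the basic case. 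For $EII$ not even a candidate $X$ is named.

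The theorem in fact has a short uniform proof requiring neither the classification nor any Jordan-type case analysis, and it is the one the paper gives. Since $\mathfrak g$ is of inner type, $\mathfrak k$ contains a Cartan subalgebra ${\mathfrak c}+{\mathfrak t}$ of $\mathfrak g$, where ${\mathfrak c}$ is the center of $\mathfrak k$ and $\mathfrak t$ is a Cartan subalgebra of $[\mathfrak k,\mathfrak k]$. Given $X\in[\mathfrak k,\mathfrak k]$ and $Y\in\mathfrak p$, Corollary~\ref{Cartan} applied to the compact algebra $[\mathfrak k,\mathfrak k]$ (exactly as in the proof of Theorem~\ref{main1}) yields $k\in K$ with $k\cdot X\in{\mathfrak t}^\perp$; since also $k\cdot X\in{\mathfrak c}^\perp$ and $k\cdot Y\in{\mathfrak p}={\mathfrak k}^\perp$, one gets $k\cdot(X+Y)\in({\mathfrak c}+{\mathfrak t})^\perp={\rm Im}\,{\rm ad}(A)$ for any regular $A\in{\mathfrak c}+{\mathfrak t}$, by Lemma~\ref{ortho}. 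In the non-Hermitian case ${\mathfrak c}=0$, so $[\mathfrak k,\mathfrak k]+{\mathfrak p}={\mathfrak g}$ and surjectivity follows. The role of the inner-type hypothesis is thus not to make a Jordan-theoretic induction converge, but simply to supply a compact Cartan subalgebra to which the convexity argument of Theorem~\ref{main1} can be transplanted; I would encourage you to redo the proof along these lines.
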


To summarize, Corollary ~\ref{cor}, Theorem ~\ref{simple} and Theorem ~\ref{inner} show that the only real simple algebras,
for which the surjectivity of the commutator map is an open question, are
${\mathfrak s \mathfrak u}_{p,q}$ ($p$ or $q > 1$), 
${\mathfrak s \mathfrak o}_{p,p+2}$\ ($p$ odd or $p=2$),\
${\mathfrak u}^*_{2m+1}({\mathbb H})$\ ($m\ge 1$) and $EIII$, 
with well-known overlaps between series. 

The author is grateful to D.I.Panyushev for useful discussions.
\section{Preliminaries}\label{prel}
We gather here several known facts which will be used later on.
Let ${\mathfrak z}(A)$ denote the centralizer
of an element $A$ in a Lie algebra $\mathfrak g$.  

\begin {lemma}\label{ortho}
Assume $\mathfrak g$ has a non-degenerate bilinear form $\beta $. Then
$${\rm Im}\ {\rm ad}(A)= {\mathfrak z}(A)^\perp ,$$
where the orthogonal complement is taken with respect to any such form $\beta $.
In particular, this complement is independent of $\beta $.
\end{lemma}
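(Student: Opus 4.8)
The plan is to derive the identity from the skew-symmetry of ${\rm ad}(A)$ with respect to $\beta$. First I would note that an invariant non-degenerate form makes ${\rm ad}(A)$ skew-symmetric, i.e.
$$\beta([A,X],Y)=-\beta(X,[A,Y])\qquad\text{for all }X,Y\in\mathfrak g;$$
this is just the associativity $\beta([U,V],W)=\beta(U,[V,W])$ together with the antisymmetry of the bracket.

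From this, the inclusion ${\rm Im}\,{\rm ad}(A)\subseteq{\mathfrak z}(A)^\perp$ is immediate: if $Y\in{\mathfrak z}(A)$ then $[A,Y]=0$, so $\beta([A,X],Y)=-\beta(X,[A,Y])=0$ for every $X\in\mathfrak g$, which means precisely that ${\rm ad}(A)X$ is $\beta$-orthogonal to $Y$. For the reverse inclusion I would compare dimensions. Since $\beta$ is non-degenerate, $\dim{\mathfrak z}(A)^\perp=\dim\mathfrak g-\dim{\mathfrak z}(A)$; on the other hand ${\mathfrak z}(A)=\ker{\rm ad}(A)$, so the rank-nullity theorem gives $\dim{\rm Im}\,{\rm ad}(A)=\dim\mathfrak g-\dim{\mathfrak z}(A)$. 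The two subspaces therefore have the same dimension, and the inclusion already established upgrades to an equality.

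The final assertion is then automatic: ${\rm Im}\,{\rm ad}(A)$ is defined without any reference to $\beta$, so the equality ${\rm Im}\,{\rm ad}(A)={\mathfrak z}(A)^\perp$ forces the right-hand side to be the same for every admissible form $\beta$. I do not expect a genuine obstacle in this argument; the one point worth keeping in mind is that $\beta$ is used only through its invariance (to get skewness of ${\rm ad}(A)$) and its non-degeneracy (to get $\dim W+\dim W^\perp=\dim\mathfrak g$), so no extra property such as symmetry or definiteness enters.
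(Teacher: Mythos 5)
Your argument is correct and is essentially the paper's own proof: both rest on the invariance identity $\beta([B,A],X)=\beta(B,[A,X])$ to get one inclusion between ${\rm Im}\,{\rm ad}(A)$ and ${\mathfrak z}(A)^\perp$, and then upgrade it to an equality by counting dimensions via non-degeneracy and rank--nullity. Your closing remark that only invariance and non-degeneracy of $\beta$ are used (and that this is why the complement is independent of $\beta$) is exactly the point the lemma is recording.
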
 
\begin{proof}
For any $B \in {\mathfrak z}(A)$ and any $X \in {\mathfrak g}$ one has
$$\beta (B, [A,X]) = \beta ([B,A], X) = 0,$$
showing that ${\mathfrak z}(A) \subset {\rm Im}\ {\rm ad }(A)^\perp $.
Since $\beta $ is non-degenerate, this inclusion is in fact
an equality by the dimension argument.
\end{proof} 

\begin{lemma} \label{repr}
Let $V$ be a real vector space and $\Gamma \subset {\rm GL}(V)$
a finite linear group acting without fixed vectors. Then
the convex hull of any $\Gamma $-orbit contains $0\in V$.
\end{lemma}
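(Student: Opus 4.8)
The plan is to average over the group. Fix $v\in V$ and form the barycenter of its $\Gamma$-orbit,
$$\bar v=\frac{1}{|\Gamma|}\sum_{\gamma\in\Gamma}\gamma v.$$
First I would check that $\bar v$ lies in the convex hull of the orbit $\Gamma v=\{\gamma v:\gamma\in\Gamma\}$: grouping equal terms, $\bar v=\sum_{w\in\Gamma v}c_w\,w$ with $c_w=|\{\gamma\in\Gamma:\gamma v=w\}|/|\Gamma|>0$ and $\sum_w c_w=1$, so indeed $\bar v\in\mathrm{conv}(\Gamma v)$.

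Next I would observe that $\bar v$ is fixed by $\Gamma$. For any $\delta\in\Gamma$, the map $\gamma\mapsto\delta\gamma$ permutes $\Gamma$, and $\delta$ acts linearly, so
$$\delta\bar v=\frac{1}{|\Gamma|}\sum_{\gamma\in\Gamma}(\delta\gamma)v=\frac{1}{|\Gamma|}\sum_{\gamma'\in\Gamma}\gamma' v=\bar v.$$
By hypothesis $\Gamma$ has no nonzero fixed vector, hence $\bar v=0$. Combined with the previous step this gives $0\in\mathrm{conv}(\Gamma v)$, which is the assertion.

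There is essentially no obstacle here; the only point deserving a word of care is the first step, namely that the barycenter is genuinely a convex combination even when the stabilizer of $v$ is nontrivial and the orbit has fewer than $|\Gamma|$ distinct points — and this is immediate as noted above. The argument uses only the finiteness of $\Gamma$ and the linearity of the action, and nothing about the structure of $V$.
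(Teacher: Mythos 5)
Your proof is correct and is essentially the same as the paper's: both form the barycenter (mass center) of the orbit, note it is a $\Gamma$-fixed vector lying in the convex hull, and conclude it must be $0$ since there are no nonzero fixed vectors. Your extra remark about the stabilizer merely makes explicit a point the paper leaves implicit.
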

\begin{proof}
Assigning mass 1 to every point of an orbit $\Gamma \cdot v$, we see that the mass center
is a fixed vector in the convex hull ${\rm Conv}(\Gamma \cdot v)$. Hence $0\in {\rm 
Conv}(\Gamma\cdot v)$.
\end{proof}
Let $\mathfrak g = \mathfrak k + \mathfrak p$ be a Cartan decomposition of a real
semisimple Lie algebra $\mathfrak g$, let $\mathfrak a \subset \mathfrak p$ be a Cartan
subspace, and let $K$ be the adjoint group ${\rm exp} ({\rm ad}\, {\mathfrak k}) \subset {\rm GL}({\mathfrak g})$.
The Weyl group acting on $\mathfrak a$
is denoted by $W$. 
The following result is due to B.Kostant \cite{Ko}.
\begin{lemma}\label {conv}
Consider the orthogonal decomposition $\mathfrak p = \mathfrak a + \mathfrak a^\perp$
with respect to the Killing form and let $\pi : {\mathfrak p} \to {\mathfrak a}$
be the corresponding projection map. Then for any $X\in {\mathfrak p}$ 
the intersection $K\cdot X \cap {\mathfrak a}$ is a single $W$-orbit and one has
$$\pi (K\cdot X) =  {\rm conv}(K\cdot X \cap \,{\mathfrak a}).$$
\end{lemma}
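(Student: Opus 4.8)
The plan is to pin down the set $\pi(K\cdot X)$ by testing it against linear functionals. Nothing in the statement changes if $X$ is replaced by another point of $K\cdot X$, and $\kappa(\pi(k\cdot X),H)=\kappa(k\cdot X,H)$ for $H\in\mathfrak a$ since $\pi$ is the $\kappa$-orthogonal projection onto $\mathfrak a$, where $\kappa$ denotes the Killing form. So I would study the function $\varphi_H:K\to\mathbb R$, $\varphi_H(k)=\kappa(k\cdot X,H)$, for $H\in\mathfrak a$. Differentiating $\varphi_H({\rm exp}(tY)k)$ at $t=0$ gives $\kappa([Y,k\cdot X],H)=-\kappa([H,k\cdot X],Y)$ for $Y\in\mathfrak k$; since $[H,k\cdot X]\in[\mathfrak p,\mathfrak p]\subseteq\mathfrak k$ and $\kappa$ is negative definite on $\mathfrak k$, the point $k$ is critical for $\varphi_H$ if and only if $[H,k\cdot X]=0$, that is, $k\cdot X\in{\mathfrak z}_{\mathfrak g}(H)\cap\mathfrak p$. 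Taking $H$ regular in $\mathfrak a$, so that ${\mathfrak z}_{\mathfrak g}(H)\cap\mathfrak p=\mathfrak a$, the maximum of $\varphi_H$ over the compact group $K$ is attained at a critical point, which therefore lies in $\mathfrak a$; in particular $K\cdot X\cap\mathfrak a\ne\emptyset$. It is classical that $K\cdot X\cap\mathfrak a$ is then a single $W$-orbit $W\cdot\lambda$, and we may replace $X$ by $\lambda$, so assume $X=\lambda\in\mathfrak a$.

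For $H$ regular the critical values of $\varphi_H$ are exactly $\{\kappa(w\lambda,H):w\in W\}$, hence
\[
\max_{s\in\pi(K\cdot\lambda)}\kappa(s,H)=\max_{k\in K}\varphi_H(k)=\max_{w\in W}\kappa(w\lambda,H)
\]
for $H$ regular, and then, by continuity, for all $H\in\mathfrak a$. The right-hand side is the support function of the polytope ${\rm conv}(W\cdot\lambda)$, so the compact set $S:=\pi(K\cdot\lambda)$ has the same support function as ${\rm conv}(W\cdot\lambda)$ and therefore ${\rm conv}(S)={\rm conv}(W\cdot\lambda)$. As $\pi$ restricts to the identity on $\mathfrak a$ we also have $W\cdot\lambda\subseteq S$, so in all $W\cdot\lambda\subseteq S\subseteq{\rm conv}(W\cdot\lambda)$.

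It remains to turn the inclusion $S\subseteq{\rm conv}(W\cdot\lambda)$ into an equality, equivalently to show that $S=\pi(K\cdot\lambda)$ is convex; granting this, $S={\rm conv}(S)={\rm conv}(W\cdot\lambda)={\rm conv}(K\cdot\lambda\cap\mathfrak a)$, which is the assertion. This is where the real work lies, and I expect it to be the main obstacle. One route is an Atiyah-type argument for convexity of the image of $\pi|_{K\cdot\lambda}$: one verifies that $\pi|_{K\cdot\lambda}$ has connected fibres and that the set of maxima of each $\varphi_H$ on $K\cdot\lambda$ is connected, and then induces on $\dim\mathfrak a$ by restricting the test functionals to the rational subspaces of $\mathfrak a$ and the sub-root-systems they determine. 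A more hands-on alternative, in the spirit of Kostant's original proof, is to prove the reverse inclusion ${\rm conv}(W\cdot\lambda)\subseteq S$ directly: for a restricted root $\alpha$, the rank-one subalgebra it generates lets one \emph{fold} any $\mu\in\mathfrak a$ across the wall $\ker\alpha$, moving $\pi$ along the whole segment $[\mu,s_\alpha\mu]$, and a sequence of such folds, followed by a closure argument, sweeps out all of ${\rm conv}(W\cdot\lambda)$. The reduction to a single $W$-orbit and the support-function computation above are by comparison routine; and of course the lemma may instead simply be quoted from \cite{Ko}.
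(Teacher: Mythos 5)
The paper does not actually prove this lemma: its ``proof'' consists of the citation to Kostant, Prop.\,2.4 for the single-$W$-orbit statement and Thm.\,8.2 for the convexity statement, so your closing remark that one may simply quote \cite{Ko} is exactly what the paper does. Your sketch of how the proof would go is sound as far as it is carried out: the critical-point analysis of $\varphi_H$ correctly yields $K\cdot X\cap\mathfrak a\ne\emptyset$, and the support-function computation correctly gives ${\rm conv}\bigl(\pi(K\cdot\lambda)\bigr)={\rm conv}(W\cdot\lambda)$ together with the inclusions $W\cdot\lambda\subseteq\pi(K\cdot\lambda)\subseteq{\rm conv}(W\cdot\lambda)$.

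That said, as a self-contained proof the proposal has a genuine gap, and you have identified it yourself: the convexity of $\pi(K\cdot\lambda)$ (equivalently the reverse inclusion ${\rm conv}(W\cdot\lambda)\subseteq\pi(K\cdot\lambda)$) is the entire content of Kostant's Theorem~8.2, and neither of your two proposed routes --- the Atiyah-style connectedness induction or the rank-one ``folding'' across walls --- is actually executed. The folding route in particular requires a nontrivial verification that for each restricted root $\alpha$ the corresponding rank-one subgroup moves $\pi(k\cdot\lambda)$ along the full segment joining it to its reflection, plus a careful iteration and limit argument; this is where all the work of \cite{Ko} lives. A second, smaller gap: the assertion that $K\cdot X\cap\mathfrak a$ is a \emph{single} $W$-orbit is itself part of the lemma, and you only establish nonemptiness before declaring the rest ``classical''; it too needs either an argument (conjugacy of Cartan subspaces via $Z_K(\lambda)$, as in Kostant's Prop.\,2.4) or an explicit citation. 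Since the paper treats the whole lemma as quoted background, your proposal is acceptable in the same spirit, but it should not be mistaken for a proof.
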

\begin{proof} The proof is found in \cite{Ko}, see Prop.\,2.4 for the first statement and Thm.\,8.2 for the second one.
\end{proof}
\begin{corollary}\label{Cartan}
For any $X \in {\mathfrak p}$ there exists an element $k \in K$, such that $k\cdot X \in {\mathfrak a}^\perp$.
\end{corollary}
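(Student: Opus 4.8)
The plan is to apply Kostant's convexity theorem (Lemma~\ref{conv}) together with the elementary fixed-point observation of Lemma~\ref{repr}. It suffices to produce $k\in K$ with $\pi(k\cdot X)=0$, for then $k\cdot X\in\ker\pi=\mathfrak a^\perp$, which is exactly the assertion. In other words, I only need to show that $0\in\pi(K\cdot X)$.

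First I would use the first part of Lemma~\ref{conv} to pick a point $a\in K\cdot X\cap\mathfrak a$; this set is a single $W$-orbit, hence in particular nonempty, so such an $a$ exists. Next I would observe that $W$ acts on $\mathfrak a$ without nonzero fixed vectors. Indeed, if $H\in\mathfrak a$ is fixed by every element of $W$, then $\alpha(H)=0$ for all restricted roots $\alpha$, so $H$ commutes with $\mathfrak m$, with $\mathfrak a$, and with every restricted root space $\mathfrak g_\alpha$; since these subspaces span $\mathfrak g$, the element $H$ lies in the center of $\mathfrak g$, which is $\{0\}$ because $\mathfrak g$ is semisimple. (Equivalently, the restricted roots span $\mathfrak a^{*}$.)

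Now Lemma~\ref{repr}, applied to the finite group $W\subset{\rm GL}(\mathfrak a)$ and the vector $a$, yields $0\in{\rm conv}(W\cdot a)$. By the first part of Lemma~\ref{conv} we have $W\cdot a=K\cdot X\cap\mathfrak a$, and by the second part $\pi(K\cdot X)={\rm conv}(K\cdot X\cap\mathfrak a)$. Combining these, $0\in\pi(K\cdot X)$, so there is $k\in K$ with $\pi(k\cdot X)=0$, i.e.\ $k\cdot X\in\mathfrak a^\perp$, completing the proof. The only content beyond quoting the two lemmas is the no-fixed-vector property of $W$, which is standard; so I do not anticipate any serious obstacle, the one point requiring care being that Lemma~\ref{conv} indeed guarantees $K\cdot X\cap\mathfrak a\neq\varnothing$ so that there is an $a$ to feed into Lemma~\ref{repr}.
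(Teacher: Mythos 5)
Your proof is correct and follows the same route as the paper: combine Kostant's convexity result (Lemma~\ref{conv}) with the fixed-point observation (Lemma~\ref{repr}) to find $k$ with $\pi(k\cdot X)=0$. You additionally spell out why $W$ acts on $\mathfrak a$ without nonzero fixed vectors, a point the paper leaves implicit, but the argument is the same.
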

\begin{proof}
The convex hull of a Weyl group orbit contains 0 by Lemma \ref{repr}. Therefore, by Lemma \ref{conv}
there exists $k \in K$, such that $\pi (k\cdot X) = 0$. 
\end{proof}
\section{Commutator map ${\mathfrak k}\times {\mathfrak p} \to 
{\mathfrak p}$}
We keep the above notations. Recall that an element $A\in {\mathfrak p}$ is called regular if
${\mathfrak z}(A) \cap {\mathfrak p}$ is a Cartan subspace.
\begin{theorem}\label{mixed}
For any $X \in {\mathfrak p}$ there exist $Y \in {\mathfrak k}$ and a regular $A \in {\mathfrak p}$,
such that $X = [Y,A]$.
\end{theorem}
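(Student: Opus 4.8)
The plan is to reduce the statement to the Kostant convexity picture from Lemma~\ref{conv} together with a dimension count via Lemma~\ref{ortho}. Fix $X\in{\mathfrak p}$. By Corollary~\ref{Cartan} we may, after replacing $X$ by $k\cdot X$ for a suitable $k\in K$ and then transporting the resulting equation back by $k^{-1}$ (using that $k\in K$ acts by Lie algebra automorphisms preserving ${\mathfrak k}$ and ${\mathfrak p}$), assume that $X\in{\mathfrak a}^\perp$, the orthogonal complement of ${\mathfrak a}$ in ${\mathfrak p}$ with respect to the Killing form. So it suffices to produce a regular $A\in{\mathfrak a}$ and some $Y\in{\mathfrak k}$ with $X=[Y,A]$.

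Now I would look at $\mathrm{Im}\,\mathrm{ad}(A)$ for regular $A\in{\mathfrak a}$. Since $A\in{\mathfrak p}$, the operator $\mathrm{ad}(A)$ is symmetric with respect to the positive definite form $B_\theta(\cdot,\cdot)=-B(\cdot,\theta\cdot)$ built from the Killing form $B$ and the Cartan involution $\theta$; it swaps ${\mathfrak k}$ and ${\mathfrak p}$, so $[{\mathfrak k},A]\subset{\mathfrak p}$. By Lemma~\ref{ortho} applied with $\beta=B$ we have $\mathrm{Im}\,\mathrm{ad}(A)={\mathfrak z}(A)^\perp$. Intersecting with ${\mathfrak p}$: because $A$ is regular, ${\mathfrak z}(A)\cap{\mathfrak p}$ is a Cartan subspace, and as it contains $A$ and lies in ${\mathfrak p}$ it must be $K$-conjugate to ${\mathfrak a}$; in fact I can choose the regular element $A$ so that ${\mathfrak z}(A)\cap{\mathfrak p}={\mathfrak a}$ (any regular element of ${\mathfrak a}$ does this). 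Then $[{\mathfrak k},A]$, which is the image of ${\mathfrak k}$ under $\mathrm{ad}(A)$ and lands in ${\mathfrak p}$, is exactly the orthogonal complement inside ${\mathfrak p}$ of ${\mathfrak z}(A)\cap{\mathfrak p}={\mathfrak a}$, i.e. $[{\mathfrak k},A]={\mathfrak a}^\perp$. Hence $X\in{\mathfrak a}^\perp=[{\mathfrak k},A]$, giving $Y\in{\mathfrak k}$ with $X=[Y,A]$, and $A$ is regular by construction. Undoing the conjugation by $k$ from the first step finishes the proof.

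The one point that needs care — and which I expect to be the main technical obstacle — is the identification $[{\mathfrak k},A]={\mathfrak a}^\perp$ for generic $A\in{\mathfrak a}$. The inclusion $[{\mathfrak k},A]\subset{\mathfrak a}^\perp$ is immediate from $B$-orthogonality ($B([Y,A],H)=B(Y,[A,H])=0$ for $H\in{\mathfrak a}$ since $[{\mathfrak a},{\mathfrak a}]=0$). For the reverse inclusion one compares dimensions: $\dim[{\mathfrak k},A]=\dim{\mathfrak k}-\dim({\mathfrak z}(A)\cap{\mathfrak k})$, and $\dim({\mathfrak z}(A)\cap{\mathfrak k})=\dim{\mathfrak m}$ for regular $A\in{\mathfrak a}$ (since ${\mathfrak z}_{\mathfrak k}(A)={\mathfrak z}_{\mathfrak k}({\mathfrak a})={\mathfrak m}$ when $A$ is ${\mathfrak a}$-regular), while $\dim{\mathfrak a}^\perp=\dim{\mathfrak p}-\dim{\mathfrak a}$; the equality then follows from $\dim{\mathfrak k}+\dim{\mathfrak p}=\dim{\mathfrak g}$ and $\dim{\mathfrak g}-\dim{\mathfrak a}-\dim{\mathfrak m}=\dim{\mathfrak p}-\dim{\mathfrak a}$, i.e. $\dim{\mathfrak k}=\dim{\mathfrak m}+(\dim{\mathfrak p}-\dim{\mathfrak a})$, which is exactly the statement that the restricted root spaces account for $\dim{\mathfrak k}-\dim{\mathfrak m}$ on the ${\mathfrak k}$-side and $\dim{\mathfrak p}-\dim{\mathfrak a}$ on the ${\mathfrak p}$-side, these being equal. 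This is standard restricted-root-space bookkeeping, but I would spell it out since it is the crux. Note that regularity of $A$ is used twice: to pin down ${\mathfrak z}_{\mathfrak k}(A)={\mathfrak m}$, and to guarantee the conclusion ``$A$ regular'' demanded by the theorem.
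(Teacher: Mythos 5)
Your proposal is correct and follows the paper's skeleton exactly up to the last step: reduce to $X\in{\mathfrak a}^\perp$ via Corollary~\ref{Cartan}, take a regular $A\in{\mathfrak a}$, and use Lemma~\ref{ortho}. Where you diverge is in showing $X\in[{\mathfrak k},A]$: you prove the equality $[{\mathfrak k},A]={\mathfrak a}^\perp\cap{\mathfrak p}$ by an inclusion plus a dimension count resting on $\dim{\mathfrak k}-\dim{\mathfrak m}=\dim{\mathfrak p}-\dim{\mathfrak a}$ (the $(1\pm\theta)$-pairing of restricted root spaces). The paper avoids this bookkeeping entirely: since ${\mathfrak z}(A)={\mathfrak z}(A)\cap{\mathfrak k}+{\mathfrak a}$ and ${\mathfrak k}\perp{\mathfrak p}$, Lemma~\ref{ortho} gives $X=[Z,A]$ for some $Z\in{\mathfrak g}$ outright, and then writing $Z=Y+Y'$ with $Y\in{\mathfrak k}$, $Y'\in{\mathfrak p}$ one observes $[Y',A]=X-[Y,A]\in{\mathfrak k}\cap{\mathfrak p}=\{0\}$, so $X=[Y,A]$. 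That two-line projection trick is worth internalizing, as it replaces your ``crux.'' One slip to fix in your write-up: the displayed identity $\dim{\mathfrak g}-\dim{\mathfrak a}-\dim{\mathfrak m}=\dim{\mathfrak p}-\dim{\mathfrak a}$ is false as stated (it would force $\dim{\mathfrak k}=\dim{\mathfrak m}$); the correct version is $\dim{\mathfrak g}-\dim{\mathfrak a}-\dim{\mathfrak m}=2(\dim{\mathfrak p}-\dim{\mathfrak a})$, and the conclusion you actually draw from it, $\dim{\mathfrak k}=\dim{\mathfrak m}+(\dim{\mathfrak p}-\dim{\mathfrak a})$, together with your restricted-root justification, is right.
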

\begin{proof} By Corollary ~\ref{Cartan} 
we may assume that $X \in {\mathfrak a}^\perp$. Take any
regular element $A\in {\mathfrak a}$. 
Its centralizer is of the form 
${\mathfrak z}(A) = {\mathfrak z}(A)\cap {\mathfrak k} + 
{\mathfrak a }$. 
Since $\mathfrak k$ and $\mathfrak p$ 
are orthogonal with respect to the Killing form,
we have 
$X \in {\mathfrak z}(A)^\perp$. 
Therefore $X = [Z,A]$ for some $Z \in {\mathfrak g}$ by Lemma ~\ref{ortho}.
Now write $Z$ as the sum $Z = Y + Y^\prime$, where $Y \in {\mathfrak k}, Y^\prime \in {\mathfrak p}$. 
Then $[Y^\prime, A] \in {\mathfrak k}$ and, in fact, $[Y^\prime, A] = 0$, hence $X = [Y,A]$.
\end{proof}
\begin{corollary}\label{split}
If $\mathfrak g$ is split then for any $X \in {\mathfrak g}$ there exist $Y \in {\mathfrak g}$ and a regular $A
\in {\mathfrak p}$, such that $X = [Y,A]$.
\end{corollary}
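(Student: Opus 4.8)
The plan is to mimic the proof of Theorem~\ref{mixed}, choosing the regular element $A$ inside $\mathfrak a$ itself, after first moving $X$ into the orthogonal complement of $\mathfrak a$ by the adjoint $K$-action. The decisive point is that splitness forces $\mathfrak m=\{0\}$, so that the centralizer in $\mathfrak g$ of a regular element of $\mathfrak a$ is as small as possible, namely equal to $\mathfrak a$ itself; this is exactly what permits us to drop the restriction $X\in\mathfrak p$ present in Theorem~\ref{mixed}.

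First I would record that $K={\rm exp}({\rm ad}\,\mathfrak k)$ preserves $\mathfrak k$ and $\mathfrak p$ separately, since $[\mathfrak k,\mathfrak k]\subset\mathfrak k$ and $[\mathfrak k,\mathfrak p]\subset\mathfrak p$, and that it preserves the Killing form and maps maximal abelian subspaces of $\mathfrak p$ to maximal abelian subspaces of $\mathfrak p$. Write $X=X_{\mathfrak k}+X_{\mathfrak p}$ with $X_{\mathfrak k}\in\mathfrak k$ and $X_{\mathfrak p}\in\mathfrak p$. Applying Corollary~\ref{Cartan} to $X_{\mathfrak p}$, we obtain $k\in K$ with $k\cdot X_{\mathfrak p}\in{\mathfrak a}^\perp$ (orthogonal complement taken inside $\mathfrak p$); since $k\cdot X_{\mathfrak k}\in\mathfrak k$ and $\mathfrak k\perp\mathfrak a$, the element $k\cdot X$ is orthogonal to $\mathfrak a$ in $\mathfrak g$. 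If we can write $k\cdot X=[Y_0,A_0]$ with $A_0$ regular in $\mathfrak p$, then $X=[k^{-1}\cdot Y_0,\,k^{-1}\cdot A_0]$ and $k^{-1}\cdot A_0$ is again regular in $\mathfrak p$ by the remarks above. Hence we may assume from the outset that $X\perp\mathfrak a$.

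Next I would take $A\in\mathfrak a$ with $\lambda(A)\ne 0$ for every restricted root $\lambda$. Then ${\mathfrak z}(A)\cap\mathfrak p=\mathfrak a$, so $A$ is regular in $\mathfrak p$ in the sense of the definition preceding Theorem~\ref{mixed}; moreover, $\mathfrak g$ being split means $\mathfrak m={\mathfrak z}_{\mathfrak k}(\mathfrak a)=\{0\}$, whence ${\mathfrak z}(A)={\mathfrak z}(A)\cap\mathfrak p=\mathfrak a$. Since $X\perp\mathfrak a={\mathfrak z}(A)$, Lemma~\ref{ortho} yields $X\in{\rm Im}\,{\rm ad}(A)$, i.e. $X=[Y,A]$ for some $Y\in\mathfrak g$, with $A$ regular in $\mathfrak p$; this is the required conclusion.

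I do not expect a serious obstacle: the argument is a routine adaptation of the proof of Theorem~\ref{mixed}, and the one place where splitness is genuinely used is the identity ${\mathfrak z}(A)=\mathfrak a$. In the non-split case one has only ${\mathfrak z}(A)=\mathfrak a+\mathfrak m$, and $X\perp\mathfrak a$ does not force $X\perp\mathfrak m$, so a single operator ${\rm ad}(A)$ with $A\in\mathfrak a$ cannot recover the $\mathfrak m$-component of $X$; coping with that component is precisely the difficulty handled by Theorem~\ref{main}.
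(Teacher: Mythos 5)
Your argument is correct and is essentially the paper's own proof: reduce to $X\perp\mathfrak a$ via Corollary~\ref{Cartan} applied to the $\mathfrak p$-component (the $\mathfrak k$-component being automatically orthogonal to $\mathfrak a$), then use that splitness gives ${\mathfrak z}(A)=\mathfrak a$ for regular $A\in\mathfrak a$ and conclude by Lemma~\ref{ortho}. You merely spell out the conjugation step and the identity ${\mathfrak z}(A)=\mathfrak a$ (via $\mathfrak m=\{0\}$) in more detail than the paper, which instead phrases the latter as ``$\mathfrak a$ is a Cartan subalgebra.''
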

\begin{proof}
Let $X = X_1 + X_2$, where $X_1 \in \mathfrak k,\,X_2 \in \mathfrak p$. 
As in the proof of Theorem~\ref{mixed}, we may assume that $X_2 \in {\mathfrak a}^\perp$. But $X_1$ is also
in ${\mathfrak a}^\perp$, hence $X \in {\mathfrak a}^\perp$. Since $\mathfrak g$ is split,
$\mathfrak a$ is a Cartan subalgebra in $\mathfrak g$. Thus $\mathfrak a = {\mathfrak z}(A)$
for any regular $A \in {\mathfrak a}$, and
our assertion follows from Lemma ~\ref{ortho}. 
\end{proof}
\begin{corollary}\label{compact}
Any element of a compact semisimple Lie algebra is a commutator of two elements and one of these two can be chosen regular.
\end{corollary}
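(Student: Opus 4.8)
The plan is to deduce this from Theorem~\ref{mixed} by realizing the compact algebra as the $\mathfrak p$-part of a Cartan decomposition of a suitable real semisimple Lie algebra --- namely, of its own complexification, which has trivial $\mathfrak p$-part in the \emph{naive} sense but becomes useful once we change the ambient algebra.

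Let $\mathfrak g$ be compact semisimple. First I would regard $\mathfrak g^{\mathbb C}$ as a real Lie algebra; it is semisimple, and $\mathfrak g^{\mathbb C} = \mathfrak g + i\mathfrak g$ is a Cartan decomposition of it, the conjugation with respect to the compact real form $\mathfrak g$ being a Cartan involution; thus here $\mathfrak k = \mathfrak g$ and $\mathfrak p = i\mathfrak g$. Given $X \in \mathfrak g$, I would apply Theorem~\ref{mixed} to the element $iX$ of $\mathfrak p$: this yields $Y \in \mathfrak k = \mathfrak g$ and a regular $A \in \mathfrak p$, say $A = iB$ with $B \in \mathfrak g$, such that $iX = [Y, iB] = i[Y, B]$. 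Hence $X = [Y, B]$ with $Y, B \in \mathfrak g$.

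It then remains to check that $B$ is a regular element of the compact algebra $\mathfrak g$, i.e. that $\mathfrak z_{\mathfrak g}(B)$ is a Cartan subalgebra (the Lie algebra of a maximal torus). Since $\mathrm{ad}(iB) = i\,\mathrm{ad}(B)$ as operators on $\mathfrak g^{\mathbb C}$, the centralizers of $A = iB$ and of $B$ in $\mathfrak g^{\mathbb C}$ coincide, and one checks directly that $\mathfrak z_{\mathfrak g^{\mathbb C}}(B) = \mathfrak z_{\mathfrak g}(B) + i\,\mathfrak z_{\mathfrak g}(B)$, so that $\mathfrak z_{\mathfrak g^{\mathbb C}}(A) \cap \mathfrak p = i\,\mathfrak z_{\mathfrak g}(B)$. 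The regularity of $A$ means precisely that this intersection is a Cartan subspace of $\mathfrak p = i\mathfrak g$, which is equivalent to $\mathfrak z_{\mathfrak g}(B)$ being a maximal abelian subalgebra of $\mathfrak g$, hence a Cartan subalgebra. Therefore $B$ is regular and the corollary follows.

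I do not expect any genuine obstacle here beyond spotting the complexification trick, which reduces everything to the already proved Theorem~\ref{mixed}; the only point demanding a moment's care is the final step, the translation of ``$A$ regular in $\mathfrak p = i\mathfrak g$'' into ``$B$ regular in $\mathfrak g$''.
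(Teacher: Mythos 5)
Your proof is correct and is essentially the paper's own argument: complexify, view $\mathfrak g^{\mathbb C}=\mathfrak g+i\mathfrak g$ as a real Cartan decomposition with $\mathfrak k=\mathfrak g$, $\mathfrak p=i\mathfrak g$, and apply Theorem~\ref{mixed} to $iX$. The only difference is that you spell out the translation of ``$A=iB$ regular in $\mathfrak p$'' into ``$B$ regular in the compact algebra,'' a point the paper leaves implicit.
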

\begin{proof} Let $\mathfrak k$ be a compact semisimple subalgebra. We can apply Theorem~\ref{mixed} to the algebra
$\mathfrak g = \mathfrak k^{\mathbb C} = {\mathfrak k} + i{\mathfrak k}$. Here $\mathfrak p = i\mathfrak k$,
and so for any $X\in \mathfrak k$ we get a presentation $iX = [Y, iA]$ or, equivalently, $X = [Y, A]$,
where $Y, A \in \mathfrak k$ and $A$ is regular.
\end{proof}

\section{Proofs of main results}
We shall prove a theorem which is slightly more precise than Theorem~\ref{main}.
Recall that $\mathfrak g = \mathfrak k+ \mathfrak p$ is the Cartan decomposition
of a semisimple Lie algebra $\mathfrak g$ and $\mathfrak m$ is the centralizer
of a Cartan subspace $\mathfrak a \subset \mathfrak p$ in $\mathfrak k$. 
Choose a Cartan subalgebra ${\mathfrak t} $ in the reductive algebra  ${\mathfrak m}$.
The orthogonal complement to a subspace $V \subset \mathfrak g $ in $\mathfrak g$
with respect to the Killing form is denoted by $V^\perp$.
The Killing form is positive definite on $\mathfrak p$, negative definite
on $\mathfrak k$, and $\mathfrak p$ and $\mathfrak k$ are mutually orthogonal.
For $V \subset \mathfrak k$, resp.
$V \subset \mathfrak p$, we write $V^\perp_{\mathfrak k} $ instead of $V^\perp \cap {\mathfrak k}$,
resp $V^\perp_{\mathfrak p}$ instead of $V^\perp \cap {\mathfrak p}$. 
\begin{theorem}\label{main1}
If $\mathfrak m $ is semisimple then any element of $\mathfrak g$
is contained in the image of ${\rm ad}\, (A)$ for some regular element $A \in {\mathfrak g}$.
\end{theorem}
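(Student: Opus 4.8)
The plan is to reduce the statement, by Lemma~\ref{ortho}, to moving a given vector into the orthogonal complement of a conveniently chosen Cartan subalgebra. By Lemma~\ref{ortho}, an element $Z\in\mathfrak g$ lies in ${\rm Im}\,{\rm ad}(A)$ if and only if $Z\perp{\mathfrak z}(A)$; and since every $k\in K$ acts on $\mathfrak g$ as a Lie algebra automorphism, $k\big({\rm Im}\,{\rm ad}(A)\big)={\rm Im}\,{\rm ad}(k\cdot A)$, so it suffices to exhibit one regular element $A_*\in\mathfrak g$ with the property that every $Z\in\mathfrak g$ can be moved into ${\mathfrak z}(A_*)^\perp$ by some $k\in K$. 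For $A_*$ I would take $A_*=A_0+A_1$, where $A_0\in{\mathfrak a}$ is a regular element of ${\mathfrak p}$ (so that ${\mathfrak z}(A_0)={\mathfrak m}+{\mathfrak a}$) and $A_1\in{\mathfrak t}$ is a regular element of the semisimple algebra ${\mathfrak m}$ (so that ${\mathfrak z}_{\mathfrak m}(A_1)={\mathfrak t}$). Here ${\rm ad}(A_0)$ is semisimple with real eigenvalues, ${\rm ad}(A_1)$ is semisimple with purely imaginary eigenvalues, and $[A_0,A_1]=0$; hence the two operators are simultaneously diagonalizable over $\mathbb C$, and on a joint eigenspace ${\rm ad}(A_*)$ acts as the sum of a real and a purely imaginary scalar, which vanishes only when both do. Consequently ${\mathfrak z}(A_*)={\mathfrak z}(A_0)\cap{\mathfrak z}(A_1)=({\mathfrak m}+{\mathfrak a})\cap{\mathfrak z}(A_1)={\mathfrak t}+{\mathfrak a}=:{\mathfrak h}$, using that $A_1\in{\mathfrak m}$ centralizes ${\mathfrak a}$; being abelian, consisting of ${\rm ad}$-semisimple elements, and self-centralizing, ${\mathfrak h}$ is a Cartan subalgebra, so $A_*$ is regular. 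Finally, since ${\mathfrak a}\subset{\mathfrak p}$, ${\mathfrak t}\subset{\mathfrak k}$ and ${\mathfrak p}\perp{\mathfrak k}$, for $Z=Z_{\mathfrak k}+Z_{\mathfrak p}$ with $Z_{\mathfrak k}\in{\mathfrak k}$, $Z_{\mathfrak p}\in{\mathfrak p}$, one has $Z\perp{\mathfrak h}$ if and only if $Z_{\mathfrak p}\perp{\mathfrak a}$ in ${\mathfrak p}$ and $Z_{\mathfrak k}\perp{\mathfrak t}$ in ${\mathfrak k}$.

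It remains to arrange both orthogonalities simultaneously by a single $k\in K$. First, Corollary~\ref{Cartan} gives $k_1\in K$ with $k_1\cdot Z_{\mathfrak p}\in{\mathfrak a}^\perp$; since $K$ preserves ${\mathfrak k}$ and ${\mathfrak p}$, I may replace $Z$ by $k_1\cdot Z$ and assume $Z_{\mathfrak p}\perp{\mathfrak a}$. Let $M\subset K$ be the connected subgroup with Lie algebra ${\rm ad}\,{\mathfrak m}$; its elements fix ${\mathfrak a}$ pointwise, so they preserve the condition $Z_{\mathfrak p}\perp{\mathfrak a}$, and they preserve the orthogonal decomposition ${\mathfrak k}={\mathfrak m}\oplus{\mathfrak m}^\perp_{\mathfrak k}$. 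Since ${\mathfrak t}\subset{\mathfrak m}$, the vector $m\cdot Z_{\mathfrak k}$ is orthogonal to ${\mathfrak t}$ precisely when its ${\mathfrak m}$-component is, and that ${\mathfrak m}$-component is $m\cdot\big(\text{the ${\mathfrak m}$-component of }Z_{\mathfrak k}\big)$; so I only need $m\in M$ carrying the ${\mathfrak m}$-component of $Z_{\mathfrak k}$ into the orthogonal complement of ${\mathfrak t}$ inside ${\mathfrak m}$. This is the compact-group case of Kostant's theorem, which I would get by applying Corollary~\ref{Cartan} to ${\mathfrak m}^{\mathbb C}={\mathfrak m}+i{\mathfrak m}$ with Cartan subspace $i{\mathfrak t}\subset i{\mathfrak m}$; by Lemma~\ref{ortho} and the equality ${\mathfrak t}={\mathfrak z}_{\mathfrak m}(A_1)$, the relevant orthogonal complement of ${\mathfrak t}$ in ${\mathfrak m}$ is the same for the Killing form of ${\mathfrak m}$ and for the restriction of the Killing form of ${\mathfrak g}$, so the resulting $m\in M$ does what is needed. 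Composing, $mk_1\cdot Z\in{\mathfrak h}^\perp={\mathfrak z}(A_*)^\perp={\rm Im}\,{\rm ad}(A_*)$, hence $Z\in{\rm Im}\,{\rm ad}\big((mk_1)^{-1}\cdot A_*\big)$, and $(mk_1)^{-1}\cdot A_*$ is a regular element of ${\mathfrak g}$ since its centralizer is the Cartan subalgebra $(mk_1)^{-1}\cdot{\mathfrak h}$.

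The only substantial ingredient is Kostant's convexity theorem, invoked through Corollary~\ref{Cartan} twice: once in ${\mathfrak g}$ to kill the ${\mathfrak a}$-component of $Z_{\mathfrak p}$, and once in the compact group $M$ to kill the ${\mathfrak t}$-component of $Z_{\mathfrak k}$. The second application is exactly where the hypothesis that ${\mathfrak m}$ be semisimple enters: if ${\mathfrak m}$ had a nonzero centre ${\mathfrak c}\subset{\mathfrak t}$, the ${\mathfrak c}$-component of the ${\mathfrak m}$-component of $Z_{\mathfrak k}$ would be $M$-invariant and, in general, nonzero, so $Z_{\mathfrak k}$ could not be pushed into ${\mathfrak t}^\perp$, and the argument would break down. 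The points needing some care are the identification ${\mathfrak z}(A_*)={\mathfrak a}+{\mathfrak t}$ (the commuting simultaneously-diagonalizable operators argument above) and the fact that $M$ induces on ${\mathfrak m}$ its full group of inner automorphisms; both are routine.
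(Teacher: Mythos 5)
Your proof is correct and follows essentially the same route as the paper's: decompose $X$ into its $\mathfrak m$-, $\mathfrak m^\perp_{\mathfrak k}$- and $\mathfrak p$-components, apply Corollary~\ref{Cartan} once in $\mathfrak g$ to kill the $\mathfrak a$-part and once in $\mathfrak m+i\mathfrak m$ to kill the $\mathfrak t$-part, and conclude via Lemma~\ref{ortho} for a regular element of the Cartan subalgebra $\mathfrak t+\mathfrak a$. The only difference is presentational: you verify explicitly that a specific $A_*=A_0+A_1$ has centralizer $\mathfrak t+\mathfrak a$, where the paper simply asserts that $\mathfrak t+\mathfrak a$ is a Cartan subalgebra and takes any regular element of it.
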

\begin{proof}
Let $M ={\rm exp}\,({\rm ad}\, {\mathfrak m}) \subset K$. 
We can apply Corollary~\ref{Cartan} to the semisimple algebra $\mathfrak m+ i\mathfrak m$ and the the Cartan
subspace $i\mathfrak t \subset i\mathfrak m$. Thus, given $Y \in {\mathfrak m}$ we can find $m \in M$ such that
$m\cdot Y $ is orthogonal to $\mathfrak t$ with respect
to the Killing form of $\mathfrak m$. However, the orthogonal complement of $\mathfrak t$ in $\mathfrak m$ is the same
for all invariant non-degenerate 
bilinear forms on $\mathfrak m$, because it coincides with the image of ${\rm ad}(A)$
for any regular $A \in \mathfrak t$. 
In particular, $m\cdot Y\in {\mathfrak t}^\perp \cap {\mathfrak m} $. 

Starting with an arbitrary $X\in {\mathfrak g}$, write $X = Y + Y^\prime + Z$, where $Y\in {\mathfrak m}, Y^\prime \in
{\mathfrak m}^\perp _{\mathfrak k}$
and $Z \in \mathfrak p$. Our goal is to present $X$ as a commutator. In doing so, we can replace $X$ by a conjugate element.
By Corollary~\ref{Cartan} we may assume that $Z \in {\mathfrak a}^\perp_{\mathfrak p}$. Now choose $m\in M$ for the given $Y\in{\mathfrak m}$
as above. Then $m\cdot Y \in {\mathfrak t}^\perp _{\mathfrak k} $ 
and $m\cdot Y^\prime \in {\mathfrak m}^\perp _{\mathfrak k}$. 
Since $m$ preserves ${\mathfrak a}$ and 
therefore also ${\mathfrak a}^\perp _{\mathfrak p}$, 
we have 
$m\cdot Z \in {\mathfrak a}^\perp_{\mathfrak p}$. In the decomposition
$m\cdot X = m\cdot Y + m\cdot Y^\prime + m\cdot Z $
the first and the second summand are in $\mathfrak k$ and in ${\mathfrak t}^\perp$,
whereas the last one is in $\mathfrak p$ 
and
in ${\mathfrak a}^\perp$. Since $\mathfrak k$ and $\mathfrak p$ are orthogonal with respect to the Killing form of $\mathfrak g$, it follows that
$m\cdot X \in {\mathfrak t}^\perp \cap {\mathfrak a}^\perp = ({\mathfrak t}+{\mathfrak a})^\perp$.

Finally, $\mathfrak t + \mathfrak a$ is a Cartan subalgebra in $\mathfrak g$.
Therefore $m\cdot X \in {\rm Im}\,{\rm ad} (A)$ for any regular $A \in {\mathfrak t}+ {\mathfrak a}$ by Lemma~\ref{ortho}.
\end{proof}

Consider the root system $\Delta $ of ${\mathfrak g}^{\mathbb C}$ with respect to the Cartan subalgebra ${\mathfrak h} = 
({\mathfrak t} + {\mathfrak a})^{\mathbb C}$. The roots vanishing on ${\mathfrak a}$ are called compact,
the remaining roots are called noncompact. The set of compact, resp. noncompact, roots is denoted by $\Delta _c$, resp. $\Delta _{nc}$.
Let $\theta : {\mathfrak g}^{\mathbb C} \to {\mathfrak g}^{\mathbb C}$ be the involutive automorphism acting as ${\rm Id}$ on
${\mathfrak k}^{\mathbb C}$ and as $-{\rm Id}$ on ${\mathfrak p}^{\mathbb C}$. One can choose an ordering of $\Delta $ in such a way
that the root $\theta ^T(\alpha )$ is negative if $\alpha \in \Delta _{nc}$ is positive. I.Satake ~\cite{Sa} showed then
that for every simple root $\alpha \in \Delta_{nc}$ there is another simple root $\alpha ^\prime \in \Delta _{nc}$, such that 
$$\theta ^T(\alpha ) = - \alpha ^\prime - \sum\ c_{\alpha \beta}\beta,$$ 
where $\beta $ ranges over simple roots in $\Delta _c$ and $c_{\alpha \beta }$ are non-negative integers.
The mapping $\alpha \mapsto \alpha ^\prime $ is involutive.
The Satake diagram of $\mathfrak g$ is defined as the Dynkin diagram of $\mathfrak g^{\mathbb C}$,
on which the compact roots are denoted by black circles, the non-compact roots are denoted by
white circles, and the white circles corresponding to $\alpha $ and $\alpha ^\prime \ne \alpha $
are joined by two-pointed arrows.

\begin{theorem}\label{arrows}
Let ${\mathfrak z}({\mathfrak m})$ be the center of ${\mathfrak m}$. Then ${\rm dim}\,{\mathfrak z}({\mathfrak m}) $ equals the number of two-pointed arrows on the Satake diagram.
\end{theorem}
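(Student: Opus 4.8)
The plan is to identify $\mathfrak{z}(\mathfrak{m})$ with an explicit subspace of the Cartan subalgebra $\mathfrak{t}$ and then count its dimension via Satake's combinatorial data. First I would note that $\mathfrak{m} = \mathfrak{z}_{\mathfrak{k}}(\mathfrak{a})$ is reductive with Cartan subalgebra $\mathfrak{t}$, and that its complexification $\mathfrak{m}^{\mathbb C}$ is spanned by $\mathfrak{h} = (\mathfrak{t}+\mathfrak{a})^{\mathbb C}$ together with the root spaces $\mathfrak{g}^{\mathbb C}_\alpha$ for $\alpha$ ranging over the compact roots $\Delta_c$ (these are exactly the roots whose root spaces centralize $\mathfrak{a}$). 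Hence the center $\mathfrak{z}(\mathfrak{m})$ is contained in $\mathfrak{h}$, and in fact $\mathfrak{z}(\mathfrak{m})^{\mathbb C} = \mathfrak{z}_{\mathfrak{m}^{\mathbb C}}(\mathfrak{m}^{\mathbb C})$ consists of those $H\in\mathfrak{h}$ on which every compact root vanishes, i.e.\ $\mathfrak{z}(\mathfrak{m})^{\mathbb C} = \{H\in\mathfrak{h} : \alpha(H)=0 \text{ for all }\alpha\in\Delta_c\}$. Since $\mathfrak{a}^{\mathbb C}$ already lies in this annihilator is false — rather, the complementary piece is what we want — I would instead observe directly that $\mathfrak{z}(\mathfrak{m}) \subset \mathfrak{t}$, because $\mathfrak{a}$ acts by nonzero eigenvalues on enough of $\mathfrak{m}$ is not the point either; cleaner: $\mathfrak{z}(\mathfrak{m})$ is the kernel in $\mathfrak{t}$ of the restrictions to $\mathfrak{t}$ of all compact roots, so $\dim_{\mathbb R}\mathfrak{z}(\mathfrak{m}) = \dim_{\mathbb R}\mathfrak{t} - \operatorname{rk}(\Delta_c|_{\mathfrak{t}})$, equivalently the corank of the compact root system inside $\mathfrak{t}^*$.

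Next I would translate this into a statement about the real span of the compact simple roots. Writing $r = \operatorname{rk}\mathfrak{g}^{\mathbb C} = \dim_{\mathbb C}\mathfrak{h}$, the number of white nodes plus black nodes on the Dynkin diagram is $r$, and the real vector space $\mathfrak{t}^*$ (the real span of $\Delta_c$ together with… ) has dimension equal to the number of black nodes plus the number of white nodes modulo the arrow-identifications — this is exactly where the arrows enter. The key structural input is Satake's relation $\theta^T(\alpha) = -\alpha' - \sum_\beta c_{\alpha\beta}\beta$ for noncompact simple roots $\alpha$, which shows that on the $(-1)$-eigenspace $\mathfrak{a}^*$ of $\theta^T$, the images of $\alpha$ and $\alpha'$ coincide, while the black (compact) simple roots vanish on $\mathfrak{a}$. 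I would set up the linear map $\theta^T$ on $\mathfrak{h}^*$ and decompose $\mathfrak{h}^* = \mathfrak{a}^* \oplus \mathfrak{t}^*$ into its $(-1)$- and $(+1)$-eigenspaces; then $\dim\mathfrak{t}^*$ equals the multiplicity of the eigenvalue $+1$. Using the Satake relations one computes the fixed space of $\theta^T$ on the span of simple roots: each black simple root is fixed (contributing $1$ each, say $b$ of them), each arrow-pair $\{\alpha,\alpha'\}$ contributes a one-dimensional fixed subspace spanned by $\alpha+\alpha'+(\text{compact correction})$ (contributing $1$ per pair, say $q$ pairs), and each unpaired white node $\alpha=\alpha'$ with $\theta^T(\alpha) = -\alpha - \sum c_{\alpha\beta}\beta$ contributes $0$ to the fixed space from that coordinate. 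Hence $\dim\mathfrak{t} = \dim\mathfrak{t}^* = b + q$.

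Finally I would combine the two counts. The corank computation gives $\dim\mathfrak{z}(\mathfrak{m}) = \dim\mathfrak{t} - (\text{rank of }\Delta_c\text{ restricted to }\mathfrak{t})$. Here I claim the compact roots $\Delta_c$ span a subspace of $\mathfrak{t}^*$ of dimension exactly $b$, the number of black nodes: the black simple roots are linearly independent in $\mathfrak{t}^*$ (they already are in $\mathfrak{h}^*$ and lie in $\mathfrak{t}^*$), and every compact root is an integral combination of compact simple roots — this is a standard fact about Satake diagrams, that the black subdiagram is the Dynkin diagram of the compact root subsystem generated by the black simple roots, and all of $\Delta_c$ is the root system it generates. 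Therefore the rank of $\Delta_c|_{\mathfrak{t}}$ is $b$, giving $\dim\mathfrak{z}(\mathfrak{m}) = (b+q) - b = q$, which is precisely the number of two-pointed arrows. The main obstacle I anticipate is the bookkeeping in the second step: verifying carefully that the $\theta^T$-fixed subspace of the real span of simple roots has the dimension claimed, i.e.\ that the arrow-pairs each contribute exactly one dimension and the ``self-paired'' white nodes contribute none, without accidental coincidences forced by the correction terms $c_{\alpha\beta}$. This amounts to showing the vectors $\{\beta : \beta \text{ black}\} \cup \{\alpha + \alpha' : \{\alpha,\alpha'\}\text{ an arrow pair}\}$ are linearly independent in $\mathfrak{h}^*$, which follows from the linear independence of all simple roots, so in fact no real difficulty survives once the eigenspace decomposition is written out correctly.
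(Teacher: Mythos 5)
Your proposal is correct and follows essentially the same route as the paper: both identify $\dim\mathfrak{z}(\mathfrak{m})$ as the corank of the compact simple roots on a Cartan subalgebra and then convert the remaining dimension ($\dim\mathfrak{t}$, equivalently $\dim\mathfrak{a}=r_{nc}-q$) into the arrow count, the only difference being that the paper quotes the standard fact that $\dim\mathfrak{a}$ equals the number of simple restricted roots where you rederive it from the $\theta^T$-eigenspace decomposition. One small slip: the fixed vector attached to an arrow pair is $\alpha+\theta^T\alpha=\alpha-\alpha'-\sum c_{\alpha\beta}\beta$, not $\alpha+\alpha'+(\text{compact correction})$, but this sign does not affect the linear independence or the dimension count.
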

\begin{proof} 
Let $r_c$ and $r_{nc}$ be the numbers of compact and non-compact simple roots,
respectively. Write $l = r_c + r_{nc}$ for the rank of ${\mathfrak g }^{\mathbb C}$.
Clearly,
$${\mathfrak m}^{\mathbb C} = {\mathfrak t}^{\mathbb C} + \sum _{\alpha \in \Delta _c}\ ({\mathfrak g}^{\mathbb C})_{\alpha} .$$
Let $\alpha_1 ,\ldots,\alpha_c$ be all compact simple roots. Then
$$\{H\in \mathfrak h \
\vert \
\alpha _1(H) = \ldots = \alpha _c(H) = 0\} = {\mathfrak z}({\mathfrak m})^{\mathbb C} + {\mathfrak a }^{\mathbb C}.$$
Therefore
$$l - r_c = {\rm dim}\,{\mathfrak z}\,({\mathfrak m}) + {\rm dim}\,{\mathfrak a}.$$
On the other hand, ${\rm dim}\,{\mathfrak a}$ is the number of simple restricted roots, i.e., $r_{nc}$ minus
the number of two-pointed arrows.
\end{proof}

\medskip
\noindent
{\it Proof of 
Theorem~\ref{simple}.}
All Satake diagrams of real simple Lie algebras are listed, 
see e.g. \cite{On}, Table 5. The diagrams without two-pointed arrows are easily found.
\hfill{$\square $}
\medskip
 
Finally, Theorem~\ref{inner} follows from a more general result, which we now prove.

\begin{theorem}
Let $\mathfrak g$ be a real semisimple Lie algebra of inner type. Then any element of $[{\mathfrak k}, {\mathfrak k}]+ {\mathfrak p}$
is the commutator of two elements of $\mathfrak g$. 
\end{theorem}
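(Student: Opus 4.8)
The plan is to run the argument of Theorem~\ref{main1} using a \emph{maximally compact} Cartan subalgebra of $\mathfrak g$ in place of the maximally split one. First I would invoke the inner type hypothesis in the form that $\mathfrak k$ has full rank, so that a Cartan subalgebra $\mathfrak t$ of the compact reductive algebra $\mathfrak k$ is at the same time a Cartan subalgebra of $\mathfrak g$. Write $\mathfrak k = \mathfrak z(\mathfrak k) + \mathfrak l$ with $\mathfrak l = [\mathfrak k, \mathfrak k]$ compact semisimple, and $\mathfrak t = \mathfrak z(\mathfrak k) + \mathfrak t'$, where $\mathfrak t' = \mathfrak t \cap \mathfrak l$ is a maximal torus of $\mathfrak l$. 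Since $\mathfrak z(\mathfrak k)$ and $\mathfrak l$ are mutually orthogonal ideals of $\mathfrak k$ and the Killing form is definite on $\mathfrak z(\mathfrak k)$, an element of $\mathfrak k$ orthogonal to $\mathfrak t$ can have no component in $\mathfrak z(\mathfrak k)$; hence $\mathfrak t^\perp_{\mathfrak k}$ lies in $\mathfrak l$ and equals the orthogonal complement of $\mathfrak t'$ in $\mathfrak l$. As $\mathfrak p \subset \mathfrak t^\perp$ as well, this yields $\mathfrak t^\perp = \mathfrak t^\perp_{\mathfrak k} + \mathfrak p \subset [\mathfrak k, \mathfrak k] + \mathfrak p$.

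Next I would prove the matching statement that every $X \in [\mathfrak k, \mathfrak k] + \mathfrak p$ can be carried into $\mathfrak t^\perp$ by an element of $K$. Writing $X = X_1 + X_2$ with $X_1 \in \mathfrak l$ and $X_2 \in \mathfrak p$, I would apply Corollary~\ref{Cartan} to the semisimple algebra $\mathfrak l^{\mathbb C} = \mathfrak l + i\mathfrak l$, with $\mathfrak p$-part $i\mathfrak l$ and Cartan subspace $i\mathfrak t'$ --- the same device as in the proof of Corollary~\ref{compact}. This produces an element $k$ of the adjoint group of $\mathfrak l$, which lies in $\exp(\mathrm{ad}\,\mathfrak l) \subset K$, such that $k\cdot X_1$ is orthogonal to $\mathfrak t'$ inside $\mathfrak l$, i.e. $k\cdot X_1 \in \mathfrak t^\perp_{\mathfrak k}$; here Lemma~\ref{ortho} is what lets me pass freely between the orthogonal complement of $\mathfrak t'$ computed for the Killing form of $\mathfrak l$, that of $\mathfrak l^{\mathbb C}$, and the restriction of the Killing form of $\mathfrak g$. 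Since $k$ preserves $\mathfrak p$, also $k\cdot X_2 \in \mathfrak p \subset \mathfrak t^\perp$, so $k\cdot X \in \mathfrak t^\perp$. Finally, $\mathfrak t$ being a Cartan subalgebra of $\mathfrak g$, Lemma~\ref{ortho} gives $\mathfrak t^\perp = \mathrm{Im}\,\mathrm{ad}(A)$ for any regular $A \in \mathfrak t$, so $k\cdot X = [A, B]$ for some $B \in \mathfrak g$ and hence $X = [k^{-1}\cdot A,\, k^{-1}\cdot B]$ is a commutator of two elements of $\mathfrak g$.

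The one step needing real care --- and the main obstacle --- is the middle one: moving the $\mathfrak k$-component $X_1$ into the complement of a maximal torus of $\mathfrak l$, and checking that the conjugating element, manufactured inside the adjoint group of $\mathfrak l^{\mathbb C}$, genuinely acts on all of $\mathfrak g$ through $\exp(\mathrm{ad}\,\mathfrak l) \subset K$, leaves $\mathfrak p$ invariant, and that the various orthogonal complements in play do coincide by Lemma~\ref{ortho}. With that bookkeeping settled the rest is automatic. Note that the method is silent about elements with a nonzero $\mathfrak z(\mathfrak k)$-component --- they sit outside $\mathfrak t^\perp$ for every compact Cartan subalgebra and are fixed by all of $K$ --- which is why the conclusion is confined to $[\mathfrak k,\mathfrak k]+\mathfrak p$; when $\mathfrak g$ is simple and non-Hermitian of inner type one has $\mathfrak z(\mathfrak k)=0$ and $[\mathfrak k,\mathfrak k]+\mathfrak p = \mathfrak g$, recovering Theorem~\ref{inner}.
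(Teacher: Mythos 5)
Your proposal is correct and follows essentially the same route as the paper: the author also takes the maximally compact Cartan subalgebra $\mathfrak c + \mathfrak t$ (center of $\mathfrak k$ plus a Cartan subalgebra of $[\mathfrak k,\mathfrak k]$), moves the $[\mathfrak k,\mathfrak k]$-component into $\mathfrak t^\perp$ via Corollary~\ref{Cartan} applied as in Theorem~\ref{main1}, identifies the orthogonal complements across the different invariant forms via Lemma~\ref{ortho}, and concludes with ${\rm Im}\,{\rm ad}(A)$ for $A$ regular in that Cartan subalgebra. Your explicit bookkeeping about $\exp({\rm ad}\,\mathfrak l)\subset K$ preserving $\mathfrak p$ and about the $\mathfrak z(\mathfrak k)$-obstruction only makes the paper's argument more transparent.
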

\begin{proof}
Let $\mathfrak c$ be the center of $\mathfrak k$, so that
$\mathfrak k = \mathfrak c + [\mathfrak k, \mathfrak k]$. Let $\mathfrak t $
be a Cartan subalgebra in $[\mathfrak k, \mathfrak k]$. 
Take $X \in [\mathfrak k, \mathfrak k]$ and $Y \in {\mathfrak p}$. Applying Corollary ~\ref {Cartan}
as in the proof of Theorem~\ref {main1}, we can find $k \in K$, so that $k\cdot X$
is orthogonal to $\mathfrak t$ with respect to the Killing form of the semisimple algebra
$[\mathfrak k, \mathfrak k]$. But the orthogonal complement to $\mathfrak t$ in $[\mathfrak k, \mathfrak k]$
is the same for all invariant non-degenerate bilinear forms on $[\mathfrak k, \mathfrak k]$
and, in particular, for the restriction of the Killing form of $\mathfrak g$. It follows that $k\cdot X \in {\mathfrak t}^\perp$.
On the other hand, $k\cdot X \in {\mathfrak c}^\perp$, hence $k\cdot X \in ({\mathfrak c + \mathfrak t})^\perp $.
Since we also have $k \cdot Y \in {\mathfrak p} = {\mathfrak k}^\perp \subset ({\mathfrak c} + {\mathfrak t})^\perp$,
it follows from Lemma~\ref{ortho}
that $k\cdot (X + Y) \in {\rm Im}\,{\rm ad}(A)$ for any regular $A$ in the Cartan subalgebra $\mathfrak c + \mathfrak t \subset
\mathfrak g$.

\end{proof} 
\begin{thebibliography}{Õ}

\bibitem [1]{Br} G.Brown, {\it On commutators in a simple Lie algebra}, Proc. Amer. Math. Soc., Vol.\,14 (1963), pp. 763--767.

\bibitem [2]{DT} D.Djokovi\'c, Tin-Yau Tam, {\it Some questions about semisimple Lie groups originating
in matrix theory},
Canad. J. Math., Vol.\,46 (3) (2003), pp. 332--343.

\bibitem [3]{Ko} B.Kostant, {\it On convexity, the Weyl group and the Iwasawa decomposition}, Ann. Sci. ENS, $4^e$ s\'erie, t.\,6 (1973), pp. 413--455. 

\bibitem [4]{On} A.L.Onishchik, {\it Lectures on real semisimple Lie algebras and their representations},
ESI Lectures in Mathematics and Physics, EMS 2004.

\bibitem [5]{Sa} I.Satake, {\it On representations and compactifications of symmetric Riemannian spaces},
Ann. of Math., Vol.\,71, No.1 (1960), pp.77--110.

\end {thebibliography}
\end {document}